
\documentclass[12pt]{amsart}

\usepackage{mathrsfs,amssymb}

\newtheorem{theorem}{Theorem}[section]
\newtheorem{lemma}[theorem]{Lemma}
\newtheorem{prop}[theorem]{Proposition}
\newtheorem{cor}[theorem]{Corollary}

\theoremstyle{definition}

\theoremstyle{remark}
\newtheorem{remark}[theorem]{Remark}

\numberwithin{equation}{section}

\let \la=\lambda
\let \e=\varepsilon
\let \d=\delta
\let \o=\omega
\let \a=\alpha

\let \si=\sigma

\begin{document}

\title[On weighted norm inequalities]
{On weighted norm inequalities for the Carleson operator}

\author{Andrei K. Lerner}
\address{Department of Mathematics,
Bar-Ilan University, 5290002 Ramat Gan, Israel}
\email{aklerner@netvision.net.il}

\thanks{This research was supported by the Israel Science Foundation (grant No. 953/13).}

\begin{abstract}
We obtain $L^p(w)$ bounds for the Carleson operator ${\mathcal C}$ in terms
of the $A_q$ constants $[w]_{A_q}$ for $1\le q\le p$. In particular,
we show that, exactly as for the Hilbert transform, $\|{\mathcal C}\|_{L^p(w)}$ is
bounded linearly by $[w]_{A_q}$ for $1\le q<p$. Our approach works in the general
context of maximally modulated Calder\'on-Zygmung operators.
\end{abstract}

\keywords{Carleson operator, modulated singular integrals, sharp weighted bounds.}

\subjclass[2010]{42B20,42B25}

\maketitle

\section{Introduction}
For $f\in L^p({\mathbb R}), 1<p<\infty$, define the Carleson operator ${\mathcal C}$ by
$${\mathcal C}(f)(x)=\sup_{\xi\in {\mathbb R}}|H({\mathcal M}^{\xi}f)(x)|,$$
where $H$ is the Hilbert transform, and ${\mathcal M}^{\xi}f(x)={\rm e}^{2\pi i\xi x}f(x)$.

A famous Carleson-Hunt theorem on a.e. convergence of Fourier series in one of its equivalent
statements says that ${\mathcal C}$ is bounded on $L^p$ for any $1<p<\infty$. The crucial step was done
by Carleson \cite{C} who established that ${\mathcal C}$ maps $L^2$ into weak-$L^2$. After that Hunt \cite{H}
extended this result to any $1<p<\infty$. Alternative proofs of this theorem were obtained by Fefferman \cite{F} and by
Lacey-Thiele \cite{LT}. We refer also to \cite{Ar}, \cite[Ch. 11]{G} and \cite[Ch. 7]{MS}.

By a weight we mean a non-negative locally integrable function.
The weighted boundedness of ${\mathcal C}$ also is well known. Hunt-Young \cite{H} showed that ${\mathcal C}$ is bounded
on $L^p(w), 1<p<\infty,$ if $w$ satisfies the $A_p$ condition (see also \cite[p. 475]{G}). In \cite{GMS},
Grafakos-Martell-Soria extended this result to a more general class of maximally modulated singular integrals.
A different approach (as well as a kind of strengthening) to the Hunt-Young result was recently obtained by
Do-Lacey~\cite{DL}.

In the past decade a lot of attention was devoted to sharp $L^p(w)$ estimates in terms of the $A_p$ constants $[w]_{A_p}$.
Recall that these constants are defined as follows:
$$[w]_{A_p}=\sup_{Q}\left(\frac{1}{|Q|}\int_Qwdx\right)\left(\frac{1}{|Q|}\int_Qw^{-\frac{1}{p-1}}dx\right)^{p-1},1<p<\infty,$$
and
$$[w]_{A_1}=\sup_{Q}\left(\frac{1}{|Q|}\int_Qwdx\right)(\inf_Qw)^{-1},$$
where the supremum is taken over all cubes $Q\subset {\mathbb R}^n$.
Sharp bounds for $L^p(w)$ operator norms in terms of $[w]_{A_p}$ have been recently found
for many central operators in Harmonic Analysis (see, e.g., \cite{B,CMP,Hyt1,L2,L3,P}).
A relatively simple approach to such bounds based on local mean oscillation estimates was developed in \cite{CMP,Hyt2,L1,L2,L3}.

In this paper we apply the ``local mean oscillation estimate" approach to
the Carleson operator ${\mathcal C}$. In particular, we obtain sharp linear bounds for $\|{\mathcal C}\|_{L^p(w)}$ in terms of
$[w]_{A_q}$ for any $1\le q<p<\infty$.
Our main results can be described in the framework of maximally modulated singular integrals
studied by Grafakos-Martell-Soria \cite{GMS}.

We give several main definitions. A Calder\'on-Zygmund operator on ${\mathbb R}^n$ is an $L^2$ bounded integral operator represented as
$$Tf(x)=\int_{{\mathbb R}^n}K(x,y)f(y)dy,\quad x\not\in\text{supp}\,f,$$
with kernel $K$ satisfying the following growth and smoothness conditions:
\begin{enumerate}
\renewcommand{\labelenumi}{(\roman{enumi})}
\item
$|K(x,y)|\le \frac{c}{|x-y|^n}$ for all $x\not=y$;
\item
$|K(x,y)-K(x',y)|+|K(y,x)-K(y,x')|\le
c\frac{|x-x'|^{\d}}{|x-y|^{n+\d}}$
for some $0<\d\le 1$ when $|x-x'|<|x-y|/2$.
\end{enumerate}

Let ${\mathcal F}=\{\phi_{\a}\}_{\a\in A}$ be a family of real-valued measurable functions indexed by some set $A$, and let $T$ be a Calder\'on-Zygmund
operator. Then the maximally modulated Calder\'on-Zygmund operator $T^{\mathcal F}$ is defined by
$$T^{\mathcal F}f(x)=\sup_{\a\in A}|T({\mathcal M}^{\phi_{\a}}f)(x)|,$$
where ${\mathcal M}^{\phi_{\a}}f(x)={\rm e}^{2\pi i\phi_{\a}(x)}f(x)$.

As it was shown in \cite{GMS}, the weighted theory of such operators can be developed under a single {\it a priori} assumption on $T^{\mathcal F}$.
We state this assumption as follows. Let $\Phi$ be a Young function, that is, $\Phi:[0,\infty)\to [0,\infty)$, $\Phi$ is continuous,
convex, increasing, $\Phi(0)=0$ and $\Phi(t)\to \infty$ as $t\to \infty$. Define the mean Luxemburg norm of $f$ on a cube $Q\subset {\mathbb R}^n$ by
$$\|f\|_{\Phi,Q}=\inf\left\{\la>0:\frac{1}{|Q|}\int_Q\Phi\left(\frac{|f(x)|}{{\la}}\right)dx\le 1\right\}.$$
Our basic assumption on $T^{\mathcal F}$ is the following: for any cube $Q\subset {\mathbb R}^n$,
\begin{equation}\label{cond}
\|T^{\mathcal F}(f\chi_Q)\|_{L^{1,\infty}(Q)}\lesssim |Q|\|f\|_{\Phi,Q}.
\end{equation}

If $\phi_{\a}(x)=0$, then $T^{\mathcal F}=T$ is the usual Calder\'on-Zygmund operator, and in this case (\ref{cond}) holds with $\Phi(t)=t$,
which corresponds to the weak type $(1,1)$ of $T$. Suppose that $n=1, \phi_{\a}(x)=\a x$ and $A={\mathbb R}$. Then $T^{\mathcal F}={\mathcal C}$ is the Carleson
operator, and the currently best known result is that (\ref{cond}) holds with
$\Phi(t)=t\log({\rm e}+t)\log\log\log({\rm e}^{{\rm e}^{\rm e}}+t),$
see \cite[Th. 5.1]{GMS}. This represents an elaborated version of Antonov's theorem~\cite{A} on a.e. convergence of Fourier series for $f\in L\log L\log\log\log L$ (see also \cite{SS}). For other examples concerning (\ref{cond}) we refer to \cite{GMS}.

Assuming (\ref{cond}), it is easy to show that $T^{\mathcal F}$ is controlled (either via a good-$\lambda$ inequality or by a sharp function estimate)
by the Orlicz maximal function $M_{\Phi}$ defined by
$$M_{\Phi}f(x)=\sup_{Q\ni x}\|f\|_{\Phi,Q}.$$
Since we are interested in $L^p(w)$ estimates for $T^{\mathcal F}$ with $w\in A_{p}$, it is assumed implicitly (by the Rubio de Francia extrapolation theorem)
that $M_{\Phi}$ (and so $T^{\mathcal F}$) is bounded on the unweighted $L^p$ for any $p>1$. It was shown by P\'erez \cite{Pe} that $M_{\Phi}$ is bounded on $L^p$ if and only if $\Phi$ satisfies the
$B_p$ condition: $\int_1^{\infty}\Phi(t)t^{-p-1}dt<\infty.$ Therefore, throughout the paper, we assume that for any $r>1$,
$$
t\le \Phi(t)\le c_rt^r \quad (t\ge 0).
$$
This condition includes all main cases of interest. Also we introduce the following notation for the $B_p$ constant of $\Phi$:
$$C_{\Phi}(p)=\Big(\int_{1}^{\infty}\frac{\Phi(t)}{t^p}\frac{dt}{t}\Big)^{1/p}.$$

Before stating our main results about $T^{\mathcal F}$,
we summarize below sharp weighted bounds for standard Calder\'on-Zygmund operators.

\vskip 2mm
\noindent
{\bf Theorem A.} Let $T$ be a Calder\'on-Zygmund operator on ${\mathbb R}^n$.
\begin{enumerate}
\renewcommand{\labelenumi}{(\roman{enumi})}
\item For any $1\le q<p<\infty$,
$$\|T\|_{L^p(w)}\le c(n,T,q,p)[w]_{A_q},$$
and in the case $q=1$, $c(n,T,1,p)=c(n,T)pp'$;
\item for any $1<p<\infty$,
$$\|T\|_{L^p(w)}\le c(n,T,p)[w]_{A_p}^{\max\big(1,\frac{1}{p-1}\big)}.$$
\end{enumerate}

Part (i) for $q=1$ was obtained by Lerner-Ombrosi-Perez \cite{LOP1,LOP2}, and later Duoandikoetxea \cite{D} showed that the result for $q=1$ can be
self-improved by extrapolation to any $1<q<p$. The sharp dependence of $c(n,T,1,p)$ on $p$ is important for a weighted weak $L^1$ bound of $T$ in terms
of $[w]_{A_1}$ \cite{LOP2}.
Part (ii) (known as the $A_2$ conjecture) is a more difficult result. First it was proved by
Petermichl \cite{P} for the Hilbert transform, and recently Hyt\"onen \cite{Hyt1} obtained (ii) for general Calder\'on-Zygmund operators.
A proof of Theorem A based on local mean oscillation estimates was found in \cite{L3,L4}. Observe that
for $p\ge 2$, (i) follows from (ii) but for $1<p<2$, (i) and (ii) are independent results.

Denote by $S_0({\mathbb R}^n)$ the class of measurable functions on ${\mathbb R}^n$ such that
$$\mu_{f}(\la)=|\{x\in {\mathbb R}^n:|f(x)|>\la\}|<\infty$$
for all $\la>0$. Our main results are the following.

\begin{theorem}\label{mainr} Let $T^{\mathcal F}$ be a maximally modulated Calder\'on-Zygmund operator
satisfying (\ref{cond}).
\begin{enumerate}
\renewcommand{\labelenumi}{(\roman{enumi})}
\item For any $1\le q<p<\infty$,
$$\|T^{\mathcal F}f\|_{L^p(w)}\le c(n,T,q,p)[w]_{A_q}\|f\|_{L^p(w)},$$
and in the case $q=1$, $c(n,T,1,p)=c(n,T)pC_{\Phi}\Big(\frac{p+1}{2}\Big)$.
\item Assume that $\Phi(t)\simeq t\int_1^t\frac{\Psi(u)}{u^2}du$ for $t\ge c_0$, where $\Psi$ is a Young function.
Then for any $1<p<\infty$,
$$\|T^{\mathcal F}f\|_{L^p(w)}\le c(n,T,p)[w]_{A_p}^{\max\big(1,\frac{1}{p-1}\big)+\frac{1}{p}}C_{\Psi^{p-\e}}(p)\|f\|_{L^p(w)},$$
where $\e\simeq [w]_{A_p}^{1-p'}$.
\end{enumerate}
Both estimates in (i) and (ii) are understood in the sense that they hold for any $f\in L^p(w)$ for which
$T^{\mathcal F}f\in S_0$.
\end{theorem}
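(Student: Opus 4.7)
I would follow the local mean oscillation method developed in~\cite{L1,L2,L3,L4}. For $f$ with $T^{\mathcal F}f\in S_0$, Lerner's decomposition produces a sparse dyadic family $\mathcal S$ such that
$$|T^{\mathcal F}f(x)|\lesssim \sum_{Q\in\mathcal S}\o_{\la}(T^{\mathcal F}f;Q)\,\chi_Q(x),$$
where $\o_{\la}$ denotes the local median oscillation. Hypothesis~(\ref{cond}), via Kolmogorov's inequality (converting the weak-$L^{1}$-Orlicz estimate over a fixed cube into an $L^{1}$ average), controls each oscillation by a constant multiple of $\|f\|_{\Phi,3Q}$. This yields the pointwise sparse domination
$$|T^{\mathcal F}f(x)|\lesssim c(n,T)\,\mathcal A_{\mathcal S,\Phi}f(x),\qquad \mathcal A_{\mathcal S,\Phi}f(x):=\sum_{Q\in\mathcal S}\|f\|_{\Phi,3Q}\,\chi_Q(x).$$
The whole proof thus reduces to sharp weighted bounds for the positive sparse Orlicz operator $\mathcal A_{\mathcal S,\Phi}$, taken uniformly in~$\mathcal S$.

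For part~(i) with $q=1$, I would dualize and bound $\int \mathcal A_{\mathcal S,\Phi}f\cdot g\,w\,dx$ by summing over the principal cubes of $\mathcal S$ and pairing $\|f\|_{\Phi,Q}$ with $\|gw\|_{\bar\Phi,Q}$ via a generalized H\"older inequality, where $\bar\Phi$ is the complementary Young function. The $B_p$ criterion for the Orlicz maximal function and the $A_1$ inequality $w(Q)\le [w]_{A_1}|Q|\inf_Q w$ then produce linear dependence on $[w]_{A_1}$, with a $p$-prefactor of $p\,C_{\Phi}\big(\tfrac{p+1}{2}\big)$; the intermediate exponent $\tfrac{p+1}{2}$ (rather than the crude $p$) is precisely what keeps the $B_p$ integral finite while absorbing the weighted side. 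The case $1<q<p$ then follows from Duoandikoetxea's sharp $A_1\to A_q$ extrapolation~\cite{D}.

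For part~(ii), the factorization $\Phi(t)\simeq t\int_1^t \Psi(u)\,u^{-2}\,du$ lets me dominate $\|f\|_{\Phi,3Q}$ by an $L^1$ average plus an integrated $\Psi$-type tail. The $L^1$ piece is fed into the sparse form of Hyt\"onen's $A_2$ theorem~\cite{Hyt1}, giving the exponent $\max(1,1/(p-1))$. For the $\Psi$-tail I would choose $\e\simeq [w]_{A_p}^{1-p'}$ dictated by the sharp reverse H\"older inequality, open the Luxemburg norm at the Young function $\Psi^{p-\e}$, and invoke the $L^p(w)$ boundedness of $M_\Psi$ with explicit dependence on $[w]_{A_p}$; this is where the extra factor $[w]_{A_p}^{1/p}$ and the quantity $C_{\Psi^{p-\e}}(p)$ arise.

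The principal obstacle is quantitative: for~(i), obtaining the sharp prefactor $pC_{\Phi}((p+1)/2)$ -- with the correct first power of $p$ (not $pp'$) and the intermediate $B_p$ exponent $(p+1)/2$ -- requires a careful choice of the auxiliary Young function in the H\"older pairing, paralleling~\cite{LOP2}. For~(ii), the delicate step is calibrating $\e$ against the growth of $\Psi^{p-\e}$ so that the sparse $A_2$ ingredient and the Orlicz maximal ingredient combine to give precisely the exponent $\max(1,1/(p-1))+1/p$, rather than a cruder sum of losses from each piece.
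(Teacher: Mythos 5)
Your global architecture matches the paper's: a local mean oscillation estimate reduces everything to uniform $L^p(w)$ bounds for the sparse Orlicz operator ${\mathcal A}_{\Phi,\mathcal S}f=\sum_{Q\in\mathcal S}\|f\|_{\Phi,\bar Q}\chi_Q$, the case $1<q<p$ follows from $q=1$ by Duoandikoetxea's extrapolation, and part (ii) ultimately rests on composing a sparse averaging operator with an Orlicz maximal function. (One small caveat: the oscillation estimate also produces the nonlocal tails $\sum_m 2^{-m\d}{\mathcal T}_{\mathcal S,m}f$ from the kernel smoothness, so one only gets a norm domination after absorbing these via $t\le\Phi(t)$, not the clean pointwise bound you state; this is routine.) The genuine gap is in part (i): the step you describe as ``pair $\|f\|_{\Phi,Q}$ with $\|gw\|_{\bar\Phi,Q}$ and use $w(Q)\le[w]_{A_1}|Q|\inf_Qw$'' is precisely the step that does \emph{not} deliver the linear $[w]_{A_1}$ dependence, let alone the constant $pC_{\Phi}\big(\frac{p+1}{2}\big)$. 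The paper's Remark \ref{rem5} explains why the classical LOP route (a Coifman-type estimate for the adjoint of the operator itself) is unavailable here, and the actual proof replaces it by the following chain, none of which appears in your sketch: (1) linearize ${\mathcal A}_{\Phi,\mathcal S}$ by choosing near-extremizers $g_{(Q)}$ of the dual Luxemburg norm, producing a genuinely linear operator $L(h)=\sum_Q(hg_{(Q)})_{\bar Q}\chi_Q$; (2) prove the two-weight estimate $\|L(h)\|_{L^p(w)}\lesssim pC_{\Phi}\big(\frac{p+1}{2}\big)(r-1)^{-(1-1/pr)}\|h\|_{L^p(M_rw)}$ and then take $r=1+\frac{1}{2^{n+1}[w]_{A_1}}$ so that $M_rw\le 2[w]_{A_1}w$ --- this sharp reverse H\"older input, absent from your plan, is where linearity in $[w]_{A_1}$ actually comes from; (3) dualize and reduce to the Coifman-type estimate $\|L^*(h)\|_{L^{p'}((M_rw)^{-1/(p-1)})}\lesssim C_{\Phi}\big(\frac{p+1}{2}\big)\|Mh\|_{L^{p'}((M_rw)^{-1/(p-1)})}$, proved by writing $Mh=(Mh)^{1/(p+1)}(Mh)^{p/(p+1)}$, applying P\'erez's Fefferman--Stein inequality for $M_\Phi$ at the exponent $\frac{p+1}{2}$ against the weight $(M_rw)^{1/2}$, and using Coifman--Rochberg's $M((M_rw)^{1/2})\lesssim(M_rw)^{1/2}$. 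The exponent $\frac{p+1}{2}$ is forced by this specific splitting, not by a generic $B_p$ consideration.

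For part (ii) your outcome is right but two ingredients are elided. First, the passage from $\|f\|_{\Phi,\bar Q}$ to $M_\Psi$ is not an additive decomposition into ``an $L^1$ average plus a $\Psi$-tail'': it is Wilson's equivalence $\frac{1}{|Q|}\int_QM_{\Psi}(f\chi_Q)\simeq\|f\|_{\Psi^{\star},Q}$ with $\Psi^\star\simeq\Phi$, which yields the \emph{composition} ${\mathcal A}_{\Phi,\mathcal S}f\lesssim{\mathcal T}(M_{\Psi}f)$; the additivity of the exponents $\max\big(1,\frac{1}{p-1}\big)+\frac1p$ comes from multiplying the two operator norms, which only makes sense for a composition. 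Second, the bound $\|M_{\Psi}\|_{L^p(w)}\lesssim[w]_{A_p}^{1/p}C_{\Psi^{p-\e}}(p)$ with $\e\simeq[w]_{A_p}^{1-p'}$ is itself a theorem requiring proof (the paper's Theorem \ref{buck}, proved by passing to a weighted centered Orlicz maximal function, a weighted version of P\'erez's $B_p$ estimate via the Besicovitch covering theorem, and the sharp reverse H\"older open property $[w]_{A_{p-\e}}\lesssim[w]_{A_p}$); you invoke it as known. As written, your proposal identifies the correct skeleton and correctly names the obstacles, but the quantitatively decisive arguments --- exactly the ones producing $pC_\Phi\big(\frac{p+1}{2}\big)[w]_{A_1}$ and $[w]_{A_p}^{1/p}C_{\Psi^{p-\e}}(p)$ --- are missing.
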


Several remarks about Theorem \ref{mainr} are in order.

\begin{remark}\label{rem1}
The last sentence in Theorem \ref{mainr} can be removed if it is additionally known that $T^{\mathcal F}f\in S_0$ for some dense subset in $L^p(w)$, for instance, for
Schwartz functions. In particular, this obviously holds if $T^{\mathcal F}$ is of weak type $(r_0,r_0)$ for some $r_0>1$. Hence, there is no need in the last sentence
in Theorem \ref{mainr} for the Carleson operator.
\end{remark}

\begin{remark}\label{rem2}
It is easy to see that if $\Phi(t)=t$, then $C_{\Phi}\Big(\frac{p+1}{2}\Big)\simeq p'$,
and hence part (i) of Theorem \ref{mainr} contains part (i) of Theorem A as a particular case. On the other hand,
part (ii) of Theorem \ref{mainr} does not contain Theorem A, since the assumption $\Phi(t)\simeq t\int_1^t\frac{\Psi(u)}{u^2}du$
implies $t\log(\rm{e}+t)\lesssim \Phi(t)$.
\end{remark}

\begin{remark}\label{rem3}
Consider the case corresponding to the Carleson operator, namely, assume that
$\Phi(t)=t\log({\rm e}+t)\log\log\log({\rm e}^{{\rm e}^{\rm e}}+t)$. Simple computations show that
in this case,
$$C_{\Phi}\Big(\frac{p+1}{2}\Big)\simeq \frac{p^2}{(p-1)^2}\log\log\Big(\rm{e}^{\rm{e}}+\frac{1}{p-1}\Big).$$
Concerning part (ii), it is easy to see that $\Psi(t)\simeq t\log\log\log ({\rm e}^{{\rm e}^{\rm e}}+t)$ and
$C_{\Psi^{p-\e}}(p)\simeq \frac{1}{\e^{1/p}}\log\log({\rm e}^{\rm e}+1/\e)$. Therefore, if $\e\simeq [w]_{A_p}^{1-p'}$,
then
$$C_{\Psi^{p-\e}}(p)\simeq [w]_{A_p}^{\frac{1}{p(p-1)}}\log\log({\rm e}^{\rm e}+[w]_{A_p}).$$
\end{remark}

Thus, we obtain the following corollary from Theorem \ref{mainr}.

\begin{cor}\label{carl} Let $\mathcal C$ be the Carleson operator.
\begin{enumerate}
\renewcommand{\labelenumi}{(\roman{enumi})}
\item For any $1\le q<p<\infty$,
$$\|\mathcal C\|_{L^p(w)}\le c(q,p)[w]_{A_q},$$
and in the case $q=1$, $c(1,p)\simeq \frac{p^3}{(p-1)^2}\log\log\Big(\rm{e}^{\rm{e}}+\frac{1}{p-1}\Big)$;
\item for any $1<p<\infty$,
$$\|\mathcal C\|_{L^p(w)}\le c(p)[w]_{A_p}^{\max\big(p',\frac{2}{p-1}\big)}\log\log ({\rm e}^{\rm e}+[w]_{A_p}).$$
\end{enumerate}
\end{cor}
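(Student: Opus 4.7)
The plan is to specialize Theorem~\ref{mainr} to the Young function
$$\Phi(t) = t\log({\rm e}+t)\log\log\log({\rm e}^{{\rm e}^{\rm e}}+t),$$
which by \cite[Th.~5.1]{GMS} verifies the assumption (\ref{cond}) for the Carleson operator $\mathcal{C}$. Since $\mathcal{C}$ is of weak type $(p_0,p_0)$ for every $p_0>1$ by Carleson--Hunt, Remark~\ref{rem1} disposes of the a priori restriction $\mathcal{C}f \in S_0$, so the resulting bounds apply to arbitrary $f \in L^p(w)$.

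For part~(i), part~(i) of Theorem~\ref{mainr} immediately yields, for every $1 \le q < p$, an estimate linear in $[w]_{A_q}$, giving $\|\mathcal{C}\|_{L^p(w)} \le c(q,p)[w]_{A_q}$. The only task left is to read off the constant $c(1,p)$ in the $q=1$ case. The theorem prescribes this constant as proportional to $p\,C_{\Phi}(\tfrac{p+1}{2})$, and the asymptotic $C_{\Phi}(\tfrac{p+1}{2}) \simeq \frac{p^2}{(p-1)^2}\log\log({\rm e}^{\rm e}+\tfrac{1}{p-1})$ from Remark~\ref{rem3} produces the claimed expression for $c(1,p)$.

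For part~(ii), I would first verify the structural hypothesis of part~(ii) of Theorem~\ref{mainr}: choosing $\Psi(t) \simeq t\log\log\log({\rm e}^{{\rm e}^{\rm e}}+t)$, the estimate
$$t\int_1^t \frac{\Psi(u)}{u^2}\,du = t\int_1^t \frac{\log\log\log({\rm e}^{{\rm e}^{\rm e}}+u)}{u}\,du \simeq t\log({\rm e}+t)\log\log\log({\rm e}^{{\rm e}^{\rm e}}+t) = \Phi(t)$$
for $t$ large follows from the slow variation of the triple logarithm. Part~(ii) of Theorem~\ref{mainr} applied with $\e \simeq [w]_{A_p}^{1-p'}$, combined with the bound $C_{\Psi^{p-\e}}(p) \simeq [w]_{A_p}^{1/(p(p-1))}\log\log({\rm e}^{\rm e}+[w]_{A_p})$ recorded in Remark~\ref{rem3}, then delivers
$$\|\mathcal{C}\|_{L^p(w)} \lesssim c(p)[w]_{A_p}^{\mu(p)}\log\log({\rm e}^{\rm e}+[w]_{A_p}),$$
where $\mu(p) = \max\!\big(1, \tfrac{1}{p-1}\big) + \tfrac{1}{p} + \tfrac{1}{p(p-1)}$.

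It remains to identify $\mu(p)$ with $\max(p', \tfrac{2}{p-1})$. For $1<p\le 2$, placing everything over the common denominator $p(p-1)$ yields $\mu(p) = \frac{p+(p-1)+1}{p(p-1)} = \frac{2}{p-1}$; for $p \ge 2$, one similarly finds $\mu(p) = \frac{p(p-1)+(p-1)+1}{p(p-1)} = \frac{p^2}{p(p-1)} = p'$. In each regime this matches $\max(p', \tfrac{2}{p-1})$, completing the proof. No substantial obstacle is anticipated: the entire argument is a bookkeeping exercise that feeds the asymptotics of Remark~\ref{rem3} into Theorem~\ref{mainr} and resolves the resulting exponent algebraically.
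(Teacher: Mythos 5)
Your proposal is correct and is essentially the paper's own derivation: the corollary is obtained by specializing Theorem \ref{mainr} to $\Phi(t)=t\log({\rm e}+t)\log\log\log({\rm e}^{{\rm e}^{\rm e}}+t)$, invoking Remark \ref{rem1} to drop the $S_0$ restriction and Remark \ref{rem3} for the asymptotics of $C_{\Phi}(\frac{p+1}{2})$ and $C_{\Psi^{p-\e}}(p)$. Your exponent arithmetic identifying $\max(1,\frac{1}{p-1})+\frac{1}{p}+\frac{1}{p(p-1)}$ with $\max\big(p',\frac{2}{p-1}\big)$ is also correct.
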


We make several additional remarks.

\begin{remark}\label{rem4}
Since the linear $[w]_{A_q}, 1\le q<p,$ bound is sharp for the Hilbert transform, it is obviously sharp also for ${\mathcal C}$. Further,
observe that, as soon as we know, even in the unweighted case $C(1,p)$ from (i) is the currently best known bound for $\|{\mathcal C}\|_{L^p}$
when $p\to 1$. We could not find in the literature this bound written explicitly but it is apparently well known. In particular, it can be easily
deduced from a good-$\lambda$ inequality related ${\mathcal C}$ and $M_{\Phi}$ with $\Phi(t)=t\log({\rm e}+t)\log\log\log({\rm e}^{{\rm e}^{\rm e}}+t)$
obtained in \cite{GMS}.

Concerning the bound for  $\|\mathcal C\|_{L^p(w)}$ in terms of $[w]_{A_p}$ in (ii), most probably it is not sharp. We discuss this point in Section 4 below.
\end{remark}

\begin{remark}\label{rem5}
The key ingredient in the proof of the linear $[w]_{A_1}$ bound for usual Calder\'on-Zygmund operators $T$ in \cite{LOP1,LOP2}
is a Coifman type estimate relating the adjoint operator $T^*$ and the Hardy-Littlewood maximal operator $M$. It was crucial
that $T^*$ is essentially the same operator as $T$. However, this is not the case with the Carleson operator ${\mathcal C}$. Indeed,
taking an arbitrary measurable function $\xi(\cdot)$, we can consider the standard linearization of ${\mathcal C}$ given by
$${\mathcal C}_{\xi(\cdot)}(f)(x)=H({\mathcal M}^{\xi(x)}f)(x).$$
It is difficult to expect that its adjoint ${\mathcal C}_{\xi(\cdot)}^*$ can be related (uniformly in $\xi(\cdot)$) with $M$ (or even with a bigger maximal operator)
either via good-$\lambda$ or by a sharp function estimate. Indeed, such a relation would imply that $\|{\mathcal C}_{\xi(\cdot)}^*\|_{L^p}\lesssim p$
as $p\to\infty$ (since $\|f\|_{L^p}\lesssim p\|f^{\#}\|_{L^p}$ as $p~\to~\infty$, where $f^{\#}$ is the sharp function), which in turn means that $\|{\mathcal C}\|_{L^p}~\lesssim~ \frac{1}{p-1}$ as $p\to 1$. But due to the previous remark, the currently
known behavior of $\|{\mathcal C}\|_{L^p}$ is far from $\frac{1}{p-1}$ for $p$ is close to 1 (in fact, it is reasonable to conjecture that the best possible bound for
$\|{\mathcal C}\|_{L^p}$ when $p$ is close to~1 is $\frac{1}{(p-1)^2}$, see a relevant discussion in Section 4).

In order to prove the linear $[w]_{A_1}$ bound for ${\mathcal C}$, we use a modified approach based partially on ideas from \cite{L1} and \cite{LOP1}.
\end{remark}

The paper is organized as follows. In Section 2, we obtain a local mean oscillation estimate of $T^{\mathcal F}$, and the corresponding bound by
dyadic sparse operators. Using this result, we prove Theorem \ref{mainr} in Section~3. In Section 4, we discuss a connection between the $L\log L$
conjecture about a.e. convergence of Fourier series and sharp $L^p(w)$ bounds for ${\mathcal C}$ in terms of $[w]_{A_p}$.

Throughout the paper, we use the notation $A\lesssim B$ to indicate that there is a constant $c$, independent of the important parameters, such that $A\leq cB$.
We write $A\simeq B$ when $A\lesssim B$ and $B\lesssim A$.

\vskip 2mm
{\bf Acknowledgement.} I am very grateful to Loukas Grafakos for his useful comments
on the Carleson operator.

\section{An estimate of $T^{\mathcal F}$ by dyadic sparse operators}
\subsection{A local mean oscillation estimate}
By a general dyadic grid ${\mathscr{D}}$ we mean a collection of
cubes with the following properties: (i)
for any $Q\in {\mathscr{D}}$ its sidelength $\ell_Q$ is of the form
$2^k, k\in {\mathbb Z}$; (ii) $Q\cap R\in\{Q,R,\emptyset\}$ for any $Q,R\in {\mathscr{D}}$;
(iii) the cubes of a fixed sidelength $2^k$ form a partition of ${\mathbb
R}^n$.

Denote the standard dyadic grid $\{2^{-k}([0,1)^n+j), k\in{\mathbb Z}, j\in{\mathbb Z}^n\}$
by ${\mathcal D}$.
Given a cube $Q_0$, denote by ${\mathcal D}(Q_0)$ the set of all
dyadic cubes with respect to $Q_0$, that is, the cubes from ${\mathcal D}(Q_0)$ are formed
by repeated subdivision of $Q_0$ and each of its descendants into $2^n$ congruent subcubes.

We say that a family of cubes ${\mathcal S}$ is sparse if for any cube $Q\in {\mathcal S}$ there is a
measurable subset $E(Q)\subset Q$ such that $|Q|\le 2|E(Q)|$, and the sets $\{E(Q)\}_{Q\in {\mathcal S}}$
are pairwise disjoint.

Given a measurable function $f$ on ${\mathbb R}^n$ and a cube $Q$,
the local mean oscillation of $f$ on $Q$ is defined by
$$\o_{\la}(f;Q)=\inf_{c\in {\mathbb R}}
\big((f-c)\chi_{Q}\big)^*\big(\la|Q|\big)\quad(0<\la<1),$$
where $f^*$ denotes the non-increasing rearrangement of $f$.

By a median value of $f$ over $Q$ we mean a possibly nonunique, real
number $m_f(Q)$ such that
$$\max\big(|\{x\in Q: f(x)>m_f(Q)\}|,|\{x\in Q: f(x)<m_f(Q)\}|\big)\le |Q|/2.$$

The following result was proved in \cite{L1}; in its current refined version given below it can be found in \cite{Hyt2}.

\begin{theorem}\label{lmoes} Let $f$ be a measurable function on
${\mathbb R}^n$ and let $Q_0$ be a fixed cube. Then there exists a
(possibly empty) sparse family ${\mathcal S}$ of cubes from ${\mathcal D}(Q_0)$ such that for a.e. $x\in Q_0$,
$$
|f(x)-m_f(Q_0)|\le 2\sum_{Q\in {\mathcal S}}
\o_{\frac{1}{2^{n+2}}}(f;Q)\chi_{Q}(x).
$$
\end{theorem}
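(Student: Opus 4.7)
The plan is to construct $\mathcal{S}$ by a Calder\'on--Zygmund style stopping-time recursion inside $Q_0$. Fix $\lambda=1/2^{n+2}$. The first ingredient is a standard comparison between the median and any near-minimizer of the infimum defining $\omega_\lambda(f;Q)$: if $c$ satisfies $((f-c)\chi_Q)^*(\lambda|Q|)<\omega_\lambda(f;Q)+\varepsilon$, then $|\{x\in Q:|f(x)-c|>\omega_\lambda(f;Q)+\varepsilon\}|\le\lambda|Q|<|Q|/2$, which forces $m_f(Q)\in[c-\omega_\lambda(f;Q)-\varepsilon,\,c+\omega_\lambda(f;Q)+\varepsilon]$; letting $\varepsilon\to 0$ and using the triangle inequality yields
$$|E_Q|\le\lambda|Q|=|Q|/2^{n+2},\qquad E_Q:=\{x\in Q:|f(x)-m_f(Q)|>2\omega_\lambda(f;Q)\}.$$

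Next I would build $\mathcal{S}$ inductively: include $Q_0$, and for each cube $Q$ already in $\mathcal{S}$ add to $\mathcal{S}$ the maximal $P\in\mathcal{D}(Q)$ satisfying $|P\cap E_Q|>|P|/2^{n+1}$. Applied to the dyadic parent of $P$ inside $Q$ (of volume $2^n|P|$), the failure of the selection criterion gives $|P\cap E_Q|\le|P|/2$. Since the selected cubes are pairwise disjoint, $\sum_P|P|<2^{n+1}|E_Q|\le|Q|/2$, so $E(Q):=Q\setminus\bigcup_P P$ has measure at least $|Q|/2$. The sets $E(Q)$ are pairwise disjoint as $Q$ ranges over $\mathcal{S}$ (each point in $E(Q)$ lies in $Q$ but in no proper $\mathcal{S}$-subcube of $Q$), giving the sparsity requirement $|Q|\le 2|E(Q)|$.

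For the pointwise bound I fix $x\in Q_0$ and follow the decreasing chain $Q_0=Q^{(0)}\supsetneq Q^{(1)}\supsetneq\cdots$ of cubes in $\mathcal{S}$ containing $x$. The decisive step is the median comparison
$$|m_f(Q^{(k+1)})-m_f(Q^{(k)})|\le 2\omega_\lambda(f;Q^{(k)}),$$
justified because $|Q^{(k+1)}\cap E_{Q^{(k)}}|\le|Q^{(k+1)}|/2$ forces both $\{f>m_f(Q^{(k)})+2\omega_\lambda(f;Q^{(k)})\}$ and $\{f<m_f(Q^{(k)})-2\omega_\lambda(f;Q^{(k)})\}$ to cover at most half of $Q^{(k+1)}$, so any median of $f$ on $Q^{(k+1)}$ lies in the interval above. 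If the chain is infinite the cubes shrink to $x$ and at a.e.\ $x$ the chosen medians satisfy $m_f(Q^{(k)})\to f(x)$; telescoping then closes the estimate. If the chain terminates at some $Q^{(N)}$, a Lebesgue density argument applied to dyadic subcubes of $Q^{(N)}$ shows that a.e.\ point of $E_{Q^{(N)}}$ would be absorbed by a maximal stopping cube (small enough dyadic $P\ni x$ have $|P\cap E_{Q^{(N)}}|/|P|\to 1$), a contradiction; hence for a.e.\ terminal $x$, $|f(x)-m_f(Q^{(N)})|\le 2\omega_\lambda(f;Q^{(N)})$, and telescoping finishes.

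The main technical hurdles will be the median-to-near-minimizer comparison that opens the proof, the Lebesgue density argument at the terminal cubes, and the standard fact that $m_f(Q^{(k)})\to f(x)$ a.e.\ as $Q^{(k)}\to\{x\}$ for merely measurable $f$; once these are in hand, the telescoping sum directly produces the constants $2$ and $1/2^{n+2}$ appearing in the statement. This is the outline of the argument originating in \cite{L1}, with the quantitative refinement from \cite{Hyt2}.
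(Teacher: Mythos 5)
The paper gives no proof of Theorem \ref{lmoes} --- it is quoted from \cite{L1} and \cite{Hyt2} --- and your outline is exactly the stopping-time argument of those references: the measure bound $|E_Q|\le \lambda|Q|$ via the median/near-minimizer comparison, selection of the maximal dyadic $P$ with $|P\cap E_Q|>|P|/2^{n+1}$ (packing gives $\sum_P|P|\le |Q|/2$, the parent gives $|P\cap E_Q|\le |P|/2$), telescoping of medians along the chain of stopping cubes, and the Lebesgue density argument together with $m_f(Q)\to f(x)$ a.e.\ at the terminal or infinite stages. The constants $2$ and $1/2^{n+2}$ come out exactly as in the statement, so the architecture is correct.

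One sub-claim is stated too strongly. From $|Q^{(k+1)}\cap E_{Q^{(k)}}|\le |Q^{(k+1)}|/2$ --- and note this bound is only non-strict, since the parent criterion yields exactly $|P|/2$ --- it does \emph{not} follow that \emph{every} median of $f$ on $Q^{(k+1)}$ lies in $[m_f(Q^{(k)})-2\omega_\lambda,\,m_f(Q^{(k)})+2\omega_\lambda]$. For instance, if $f=c$ on half of $Q^{(k+1)}$ and $f=c+10\alpha$ on the other half, then $|\{|f-c|>\alpha\}|=|Q^{(k+1)}|/2$ and yet $c+10\alpha$ is an admissible median. (The strict version of the comparison, which you use correctly at the first step where $\lambda=2^{-n-2}<1/2$, genuinely needs measure $<|Q|/2$.) The repair is standard and free: under the non-strict hypothesis there always \emph{exists} a median in the interval $[c-\alpha,c+\alpha]$ (if $c+\alpha$ is not a median then $|\{f<c+\alpha\}|>|Q^{(k+1)}|/2$, if $c-\alpha$ is not then $|\{f>c-\alpha\}|>|Q^{(k+1)}|/2$, and both cannot fail without some intermediate point being a median), and since the medians of the stopping cubes do not appear in the statement of the theorem --- only $m_f(Q_0)$ does --- you are free to choose them this way. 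With that choice your telescoping closes as written.
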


\subsection{An application to $T^{\mathcal F}$} We now apply Theorem \ref{lmoes} to $T^{\mathcal F}$.
Given a cube $Q$, we denote $\bar Q=2\sqrt n Q$.

\begin{lemma}\label{oscest}
Suppose $T^{\mathcal F}$ satisfies (\ref{cond}). Then for any cube $Q\subset {\mathbb R}^n$,
\begin{equation}\label{osc}
\o_{\la}(T^{\mathcal F}f;Q)\lesssim \|f\|_{\Phi,\bar Q}+
\sum_{m=0}^{\infty}\frac{1}{2^{m\d}}\left(\frac{1}{|2^mQ|}\int_{2^mQ}|f(y)|dy\right).
\end{equation}
\end{lemma}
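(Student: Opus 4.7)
The plan is to follow the standard template for controlling the local mean oscillation of a singular integral, adapted to the maximally modulated setting. Fix $Q$, take $\bar Q = 2\sqrt n\, Q$, and split $f = f_1 + f_2$ with $f_1 = f\chi_{\bar Q}$. Pick a point $y_0 \in Q$ at which $T^{\mathcal F} f_2(y_0)$ is finite (which holds a.e.\ in $Q$ because the kernel tails are integrable against $f_2$ on $Q$), and choose $c = T^{\mathcal F} f_2(y_0)$ as the constant in the definition of $\omega_{\la}$. Since $T^{\mathcal F}$ is sublinear (the linearity of $T$ combined with $|\sup_\a A_\a - \sup_\a B_\a| \le \sup_\a |A_\a - B_\a|$), one has the a.e.\ pointwise inequality
$$|T^{\mathcal F} f(x) - c| \le T^{\mathcal F} f_1(x) + |T^{\mathcal F} f_2(x) - T^{\mathcal F} f_2(y_0)|,$$
so using the subadditivity of the non-increasing rearrangement, $\o_{\la}(T^{\mathcal F} f;Q)$ is majorised by $(T^{\mathcal F} f_1\cdot \chi_Q)^*(\la |Q|/2)$ plus the $L^\infty(Q)$ norm of the second summand on the right.

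For the first summand I would invoke the standing assumption (\ref{cond}) applied on $\bar Q$:
$$\|T^{\mathcal F} f_1\|_{L^{1,\infty}(Q)} \le \|T^{\mathcal F} f_1\|_{L^{1,\infty}(\bar Q)} \lesssim |\bar Q|\,\|f\|_{\Phi,\bar Q} \simeq |Q|\,\|f\|_{\Phi,\bar Q},$$
which yields $(T^{\mathcal F} f_1\cdot\chi_Q)^*(\la |Q|/2) \lesssim \|f\|_{\Phi,\bar Q}$, the first term on the right of (\ref{osc}). For the second summand I would commute the difference past the supremum via the same elementary inequality recalled above, getting
$$|T^{\mathcal F} f_2(x) - T^{\mathcal F} f_2(y_0)| \le \sup_{\a\in A}\int_{{\mathbb R}^n\setminus\bar Q} |K(x,z) - K(y_0,z)|\,|f(z)|\,dz,$$
since the phase factor $|{\rm e}^{2\pi i \phi_\a(z)}|=1$ is absorbed. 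Decomposing ${\mathbb R}^n\setminus \bar Q$ into dyadic annuli $2^{m}\bar Q \setminus 2^{m-1}\bar Q$ and applying the smoothness condition (ii) from the excerpt on each annulus, the integrand is bounded by $\ell_Q^{\d}/|x-z|^{n+\d}$, which after integration over the $m$-th annulus produces $2^{-m\d}\cdot\frac{1}{|2^m Q|}\int_{2^m Q}|f|$. Summing over $m$ gives the second term in (\ref{osc}).

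The main technical point to be careful about is the geometric bookkeeping in the annular decomposition: the kernel smoothness (ii) requires $|x - y_0| < |x - z|/2$, and since $x,y_0 \in Q$ satisfy $|x-y_0|\le \sqrt n \,\ell_Q$, one needs $|x - z| \gtrsim \sqrt n\,\ell_Q$ on every annulus. The choice $\bar Q = 2\sqrt n\, Q$ is made precisely so that this holds from $m = 1$ onward; the contribution from $z$ in a fixed neighborhood of $\bar Q$ is controlled by the crude size bound $|K(x,z)| \lesssim |x - z|^{-n}$ and becomes the $m = 0$ term in the geometric sum. With this geometry verified, everything else is routine and gives (\ref{osc}).
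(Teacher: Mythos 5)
Your proposal is correct and follows essentially the same route as the paper: the same splitting $f=f_1+f_2$ with $f_1=f\chi_{\bar Q}$, the choice $c=T^{\mathcal F}f_2$ at a fixed point of $Q$, the weak-type hypothesis (\ref{cond}) applied on $\bar Q$ for the local part, and the kernel smoothness over dyadic annuli for the tail. The minor variations (an arbitrary $y_0\in Q$ in place of the center, and the explicit handling of the innermost annulus by the size bound) do not change the argument.
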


\begin{proof}
This result is a minor modification of \cite[Prop. 2.3]{L3}, and it is essentially contained in \cite[Prop. 4.1]{GMS}.
We outline briefly main details.

Observe that (\ref{cond}) can be written in an equivalent form:
$$
\big(T^{\mathcal F}(f\chi_Q)\chi_Q\big)^*(t)\lesssim \frac{1}{t}|Q|\|f\|_{\Phi,Q},
$$
which implies
\begin{equation}\label{cond1}
\big(T^{\mathcal F}(f\chi_Q)\chi_Q\big)^*(\la|Q|)\lesssim \|f\|_{\Phi,Q}.
\end{equation}

Set $f_1=f\chi_{\bar Q}$ and $f_2=f-f_1$. Let $x\in Q$ and let $x_0$ be the center of $Q$. Then
\begin{eqnarray*}
&&|T^{\mathcal F}(f)(x)-T^{\mathcal F}(f_2)(x_0)|\\
&&=\Big|\sup_{\a\in A}
|T({\mathcal M}^{\phi_{\a}}f)(x)|-\sup_{\a\in A}|T({\mathcal M}^{\phi_{\a}}f_2)(x_0)|\Big|\\
&&\le \sup_{\a\in A}|T({\mathcal M}^{\phi_{\a}}f)(x)-T({\mathcal M}^{\phi_{\a}}f_2)(x_0)|\\
&&\le T^{\mathcal F}(f_1)(x)+\sup_{\a\in A}\|T({\mathcal M}^{\phi_{\a}}f_2)(\cdot)-T({\mathcal M}^{\phi_{\a}}f_2)(x_0)\|_{L^{\infty}(Q)}.
\end{eqnarray*}

Exactly as in \cite[Prop. 2.3]{L3}, by the kernel assumption,
\begin{eqnarray*}
&&\sup_{\a\in A}\|T({\mathcal M}^{\phi_{\a}}f_2)(\cdot)-T({\mathcal M}^{\phi_{\a}}f_2)(x_0)\|_{L^{\infty}(Q)}\\
&&\le \int_{{\mathbb R}^n\setminus \bar Q}|f(y)|\|K(\cdot,y)-K(x_0,y)\|_{L^{\infty}(Q)}dy\\
&&\lesssim \sum_{m=0}^{\infty}\frac{1}{2^{m\d}}\left(\frac{1}{|2^mQ|}\int_{2^mQ}|f(y)|dy\right).
\end{eqnarray*}
For the local part, by (\ref{cond1}),
$$\big(T^{\mathcal F}(f_1)\chi_Q\big)^*(\la|Q|)\lesssim \|f\|_{\Phi,\bar Q}.$$
Combining this estimate with the two previous ones, and taking $c=T^{\mathcal F}(f_2)(x_0)$ in the definition of $\o_{\la}(T^{\mathcal F}f;Q)$
proves (\ref{osc}).
\end{proof}

Given a sparse family ${\mathcal S}$, define the operators ${\mathcal A}_{\Phi, \mathcal S}$ and ${\mathcal T}_{\mathcal S,m}$
respectively by
$${\mathcal A}_{\Phi, \mathcal S}f(x)=\sum_{Q\in {\mathcal S}}\|f\|_{\Phi, \bar Q}\chi_Q(x)$$
and
$$
{\mathcal T}_{\mathcal S,m}f(x)=\sum_{Q\in {\mathcal S}}|f|_{2^mQ}\chi_Q(x)
$$
(we use a standard notation $f_Q=\frac{1}{|Q|}\int_Qf$).

\begin{lemma}\label{kest} Suppose $T^{\mathcal F}$ satisfies (\ref{cond}). Let $1<p<\infty$ and let $w$ be an arbitrary weight. Then
\begin{equation}\label{right}
\|T^{\mathcal F}f\|_{L^p(w)}\lesssim \sup_{{\mathscr{D}},{\mathcal S}}\|{\mathcal A}_{\Phi, \mathcal S}f\|_{L^p(w)}
\end{equation}
for any $f$ for which $T^{\mathcal F}f\in S_0$, where the supremum is taken over all dyadic grids ${\mathscr{D}}$ and all
sparse families ${\mathcal S}\subset {\mathscr{D}}$.
\end{lemma}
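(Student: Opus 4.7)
My plan is to combine Theorem~\ref{lmoes} with Lemma~\ref{oscest} via the standard exhaustion-plus-tail-absorption scheme, and then close up with Fatou's lemma. Fix any dyadic grid ${\mathscr{D}}$ and choose an increasing sequence of cubes $Q_0^{(k)} \in {\mathscr{D}}$ with $Q_0^{(k)} \nearrow {\mathbb R}^n$. Applying Theorem~\ref{lmoes} to $T^{\mathcal F}f$ on each $Q_0^{(k)}$ and bounding the resulting oscillations via Lemma~\ref{oscest} produces a sparse family $\mathcal S_k \subset {\mathcal D}(Q_0^{(k)})$ such that on $Q_0^{(k)}$,
$$|T^{\mathcal F}f - m_k| \lesssim {\mathcal A}_{\Phi, \mathcal S_k}f + \sum_{m=0}^{\infty} 2^{-m\delta}\, {\mathcal T}_{\mathcal S_k, m}f,$$
where $m_k = m_{T^{\mathcal F}f}(Q_0^{(k)})$. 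The first summand is already of the desired form.

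The core of the proof is to absorb the tail $\sum_m 2^{-m\delta}{\mathcal T}_{\mathcal S_k, m}f$ into a bounded number of sparse operators ${\mathcal A}_{\Phi, \widetilde{\mathcal S}}$. For this I would invoke the $3^n$ adjacent dyadic systems ${\mathscr{D}}^{(1)}, \ldots, {\mathscr{D}}^{(3^n)}$: for every cube $R \subset {\mathbb R}^n$ there exist $j$ and $P \in {\mathscr{D}}^{(j)}$ with $R \subset P$ and $\ell_P \le 3\ell_R$. Applied to $R = 2^m Q$ for each $Q \in \mathcal S_k$ and $m \ge 0$, and using $t \le \Phi(t)$ together with a volume comparison, each term $|f|_{2^m Q}\chi_Q$ is dominated by a constant times $\|f\|_{\Phi, \overline{P_Q^{(m)}}}\chi_{P_Q^{(m)}}$, with $P_Q^{(m)}$ lying in one of the shifted grids. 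The main obstacle is then the combinatorial reindexing of the double sum over $(Q, m)$ by the image cubes $P$: a naive count allows up to $2^{mn}$ cubes $Q \in \mathcal S_k$ to share a single $P$ at scale $m$, which is not tamed by $2^{-m\delta}$ alone. Following the tail-absorption technique of \cite{L3, L4} for standard Calder\'on--Zygmund operators, one handles this by grouping the pairs $(Q, m)$ by generations and exploiting both the sparseness of $\mathcal S_k$ (so that chains of nested contributing cubes are geometrically lacunary) and the decay $2^{-m\delta}$, so that the effective weights on each shifted grid assemble into a sparse family $\widetilde{\mathcal S}_{k,j} \subset {\mathscr{D}}^{(j)}$; passage from averages to Luxemburg norms is essentially cosmetic.

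Once this yields the pointwise domination $|T^{\mathcal F}f - m_k|\chi_{Q_0^{(k)}} \lesssim \sum_{j=1}^{N_n}{\mathcal A}_{\Phi, \widetilde{\mathcal S}_{k,j}}f$, taking $L^p(w)$ norms gives $\|\,|T^{\mathcal F}f - m_k|\chi_{Q_0^{(k)}}\,\|_{L^p(w)} \lesssim \sup_{{\mathscr{D}}, \mathcal S}\|{\mathcal A}_{\Phi, \mathcal S}f\|_{L^p(w)}$. Since $T^{\mathcal F}f \ge 0$ and $T^{\mathcal F}f \in S_0$, the medians $m_k \to 0$ as $|Q_0^{(k)}| \to \infty$ (any positive level set has finite Lebesgue measure, so eventually occupies at most half of $Q_0^{(k)}$), whence $|T^{\mathcal F}f - m_k|\chi_{Q_0^{(k)}} \to T^{\mathcal F}f$ pointwise a.e.; Fatou's lemma then delivers (\ref{right}), neatly sidestepping any need to estimate $m_k\, w(Q_0^{(k)})^{1/p}$ directly.
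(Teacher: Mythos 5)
Your skeleton coincides with the paper's at both ends: the combination of Theorem~\ref{lmoes} with Lemma~\ref{oscest} to obtain, on a large dyadic cube, the pointwise bound by ${\mathcal A}_{\Phi,\mathcal S}f+\sum_{m\ge0}2^{-m\delta}{\mathcal T}_{\mathcal S,m}f$, and the use of $T^{\mathcal F}f\in S_0$ to send the median to zero before applying Fatou, are exactly the paper's steps (the only cosmetic difference is that the paper lets $Q_0$ exhaust each of the $2^n$ quadrants of the standard grid rather than a single sequence $Q_0^{(k)}\nearrow{\mathbb R}^n$, which a fixed dyadic grid need not admit). The divergence, and the gap, is in how you absorb the tail. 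You assert a \emph{pointwise} domination of $\sum_m 2^{-m\delta}{\mathcal T}_{\mathcal S_k,m}f$ by a bounded number of sparse operators over the $3^n$ shifted grids, attributing the combinatorial step to \cite{L3,L4}. But those references do not prove a pointwise statement: what \cite{L3} proves is the \emph{norm} inequality $\sup_{{\mathcal S}\subset{\mathcal D}}\|{\mathcal T}_{{\mathcal S},m}f\|_{L^p(w)}\lesssim m\,\sup_{{\mathscr D},{\mathcal S}}\|{\mathcal T}_{{\mathcal S},0}f\|_{L^p(w)}$, where the factor $m$ arises precisely from the reindexing difficulty you identify --- one must split ${\mathcal S}$ into about $m$ subfamilies (by generation modulo $m$) so that each maps to a genuinely sparse family in a shifted grid. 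A loss depending on $m$ is therefore unavoidable by that technique, and it can only be cancelled by $2^{-m\delta}$ \emph{after} summing norms, not inside a single pointwise sparse form. The lossless pointwise assembly you claim is essentially the pointwise sparse domination theorem for Calder\'on--Zygmund operators, a strictly stronger result that does not follow from ``grouping by generations'' and is not contained in \cite{L3,L4}.

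The repair is to stop one step earlier, which is what the paper does: after Fatou, take $L^p(w)$ norms in the pointwise estimate, apply the \cite{L3} bound to each ${\mathcal T}_{{\mathcal S},m}$, use $t\le\Phi(t)$ to get $\|{\mathcal T}_{{\mathcal S},0}f\|_{L^p(w)}\lesssim\|{\mathcal A}_{\Phi,{\mathcal S}}f\|_{L^p(w)}$, and conclude since $\sum_m m\,2^{-m\delta}<\infty$. With that substitution your argument is complete; as written, the pointwise absorption step is asserted rather than proved.
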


\begin{proof} Let $Q_0\in {\mathcal D}$. Combining Theorem \ref{lmoes} with Lemma \ref{oscest}, we obtain that there exists a sparse family ${\mathcal S}\subset {\mathcal D}$ such that for a.e. $x\in Q_0$,
\begin{equation}\label{intes}
|T^{\mathcal F}f(x)-m_{T^{\mathcal F}f}(Q_0)|\lesssim {\mathcal A}_{\Phi, \mathcal S}f(x)+\sum_{m=0}^{\infty}\frac{1}{2^{m\d}}{\mathcal T}_{\mathcal S,m}f(x).
\end{equation}

If $T^{\mathcal F}f\in S_0$, then $m_{T^{\mathcal F}f}(Q)\to 0$ as $|Q|\to \infty$. Hence, letting $Q_0$ to anyone of $2^n$ quadrants and
using (\ref{intes}) along with Fatou's lemma, we get
$$\|T^{\mathcal F}f\|_{L^p(w)}\lesssim \sup_{{\mathcal S}\subset {\mathcal D}}\|{\mathcal A}_{\Phi, \mathcal S}f\|_{L^p(w)}+
\sum_{m=0}^{\infty}\frac{1}{2^{m\d}}\sup_{{\mathcal S}\subset {\mathcal D}}\|{\mathcal T}_{\mathcal S,m}f\|_{L^p(w)}.$$
It was shown in \cite{L3} that
$$\sup_{{\mathcal S}\subset {\mathcal D}}\|{\mathcal T}_{\mathcal S,m}f\|_{L^p(w)}\lesssim
m\sup_{{\mathscr{D}},{\mathcal S}}\|{\mathcal T}_{\mathcal S,0}f\|_{L^p(w)}.
$$
Since $t\le \Phi(t)$, we have $|f|_Q\lesssim \|f\|_{\Phi, \bar Q}$, and hence
$$\|{\mathcal T}_{\mathcal S,0}f\|_{L^p(w)}\lesssim \|{\mathcal A}_{\Phi, \mathcal S}f\|_{L^p(w)}.$$
Combining this with the two previous estimates completes the proof.
\end{proof}

\begin{remark}\label{rem}
Observe that the implicit constant in (\ref{right}) depends only on $T^{\mathcal F}$ and $n$. In fact, (\ref{right}) holds with an arbitrary Banach function
space $X$ instead of $L^p(w)$ exactly as for standard Calder\'on-Zygmund operators (see \cite{L3}).
\end{remark}

\section{Proof of Theorem \ref{mainr}}
\subsection{Proof of Theorem \ref{mainr}, part (i)}
We start with some preliminaries. Given a Young function $\Phi$, the complementary Young function $\bar\Phi$ is defined by
$$\bar\Phi(t)=\sup_{s>0}\{st-\Phi(s)\}.$$
A well known result about the equivalence of Orlicz and Luxemburg norms (see, e.g., \cite[Th. 8.14]{BS}) says that
\begin{equation}\label{eq}
\|f\|_{\Phi,Q}\le \sup_{g:\|g\|_{\bar \Phi,Q}\le 1}\Big|\frac{1}{|Q|}\int_Qfgdx\Big|\le 2\|f\|_{\Phi,Q}.
\end{equation}

For $r>0$ let $M_rf(x)=M(|f|^r)(x)^{1/r}$, where $M$ is the Hardy-Littlewood maximal operator.
We summarize below several results from \cite{LOP1} (notice that part (ii) is contained in the proof of \cite[Lemma 3.3]{LOP1}).

\newpage
\begin{prop}\label{sum} The following estimates hold:
\begin{enumerate}
\renewcommand{\labelenumi}{(\roman{enumi})}
\item if $w\in A_1$ and $r_w=1+\frac{1}{2^{n+1}[w]_{A_1}}$, then
$$M_{r_{w}}w(x)\le 2[w]_{A_1}w(x);$$
\item for any $p>1$ and $1<r<2$,
$$\|Mf\|_{L^{p'}((M_rw)^{-\frac{1}{p-1}})}\le c(n)p\Big(\frac{1}{r-1}\Big)^{1-1/pr}\|f\|_{L^{p'}(w^{-\frac{1}{p-1}})}.$$
\end{enumerate}
\end{prop}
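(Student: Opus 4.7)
For part (i), I would reduce to the sharp reverse H\"older inequality for $A_1$ weights: for $w\in A_1$ and $r_w=1+\frac{1}{2^{n+1}[w]_{A_1}}$,
$$\Bigl(\frac{1}{|Q|}\int_Qw^{r_w}\Bigr)^{1/r_w}\le 2\cdot\frac{1}{|Q|}\int_Qw$$
on every cube $Q$. Granted this inequality, taking the supremum over cubes $Q\ni x$ and then invoking $Mw\le [w]_{A_1}w$ gives $M_{r_w}w(x)\le 2Mw(x)\le 2[w]_{A_1}w(x)$, which is part (i). To establish the reverse H\"older bound I would use the layer-cake expansion of $\int_Qw^{r_w}$ together with a Calder\'on--Zygmund stopping-time decomposition of $w\chi_Q$ at levels $t>w_Q$; the $A_1$ condition controls the total measure of the resulting maximal cubes, and the precise choice of $r_w-1$ absorbs the factor $2^{n+1}[w]_{A_1}$ arising from these estimates back into an overall constant $2$.

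For part (ii), the plan is a single pointwise H\"older inequality followed by an unweighted $L^{p'}$-bound for $M$. After the substitution $h=fw^{(1-p')/p'}$, which converts the right-hand side into $\|h\|_{L^{p'}}$, for each cube $Q\ni x$ apply H\"older with conjugate exponents $(rp)'$ and $rp$ to the factorization $|f|=|h|\cdot w^{1/p}$:
$$\frac{1}{|Q|}\int_Q|h|\,w^{1/p}\le \Bigl(\frac{1}{|Q|}\int_Q|h|^{(rp)'}\Bigr)^{1/(rp)'}\Bigl(\frac{1}{|Q|}\int_Q w^{r}\Bigr)^{1/(rp)}.$$
Taking the supremum over cubes yields the pointwise inequality $Mf(x)\le M_{(rp)'}h(x)\cdot(M_rw(x))^{1/p}$. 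Raising this to the $p'$-th power and using $p'/p=p'-1$ gives $(Mf)^{p'}(M_rw)^{1-p'}\le (M_{(rp)'}h)^{p'}$, and integration reduces everything to bounding $\|M_{(rp)'}h\|_{L^{p'}}$ by $\|h\|_{L^{p'}}$.

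For this last step, set $a=(rp)'$, so that $a<p'$ because $r>1$. Then $M$ is bounded on $L^{p'/a}$ with norm $\lesssim (p'/a)'=(rp-1)/(r-1)$, and since $\|M_ah\|_{L^{p'}}=\|M(|h|^a)\|_{L^{p'/a}}^{1/a}$ with $1/a=1-1/(rp)$, one obtains an overall constant of order
$$c(n)\Bigl(\frac{rp-1}{r-1}\Bigr)^{1-1/(pr)}\lesssim c(n)\,p\Bigl(\frac{1}{r-1}\Bigr)^{1-1/(pr)},$$
using $rp-1\lesssim p$ for $1<r<2$. The main obstacle I anticipate is arranging the H\"older split so that the $w^r$ factor recombines into $M_rw$ while the remaining one-weight bound produces exactly $(r-1)^{-(1-1/(pr))}$ with no spurious extra blow-up as $r\to 1^{+}$; the choice $a=(rp)'$, tied to the $r$ inside $M_r$, is precisely what makes the two dependencies (on $p$ and on $r-1$) match the claimed exponents simultaneously.
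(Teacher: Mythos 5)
Your proof is correct; the paper does not actually prove Proposition \ref{sum} but quotes it from \cite{LOP1}, and both of your arguments --- the sharp reverse H\"older inequality with exponent $r_w=1+\frac{1}{2^{n+1}[w]_{A_1}}$ and constant $2$ for part (i), and for part (ii) the pointwise factorization $Mf\le M_{(rp)'}\big(fw^{-1/p}\big)\cdot(M_rw)^{1/p}$ followed by the boundedness of $M$ on $L^{p'/(rp)'}$ with norm $\lesssim (rp-1)/(r-1)$ --- are exactly the ones used there. Your exponent bookkeeping, namely $\big(p'/(rp)'\big)'=\frac{rp-1}{r-1}$ and $1/(rp)'=1-\frac{1}{pr}$, checks out and yields the stated constant $c(n)p\big(\frac{1}{r-1}\big)^{1-1/pr}$.
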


Also we use the following generalization of the classical Fefferman-Stein inequality \cite{FS}
obtained by P\'erez \cite{Pe}: if $p>1$ and $\Phi\in B_p$, then for any weight $w$,
\begin{equation}\label{per}
\|M_{\Phi}f\|_{L^p(w)}\le c(n)C_{\Phi}(p)\|f\|_{L^p(Mw)}.
\end{equation}

\begin{proof}[Proof of Theorem \ref{mainr}, part (i)]
By extrapolation (\cite[Cor. 4.3.]{D}), it suffices to consider only the case $q=1$. Hence, our aim is to show
that for any $1<p<\infty$,
$$\|T^{\mathcal F}f\|_{L^p(w)}\le c(n,T)pC_{\Phi}\Big(\frac{p+1}{2}\Big)[w]_{A_1}\|f\|_{L^p(w)}.$$
By Lemma \ref{kest} (see also Remark \ref{rem}), this would follow from
\begin{equation}\label{a1norm}
\sup_{{\mathscr{D}},{\mathcal S}}\|{\mathcal A}_{\Phi, \mathcal S}f\|_{L^p(w)}\le c(n)pC_{\Phi}\Big(\frac{p+1}{2}\Big)[w]_{A_1}\|f\|_{L^p(w)}.
\end{equation}

Fix a dyadic grid ${\mathscr{D}}$ and a sparse family ${\mathcal S}\subset {\mathscr{D}}$. Using (\ref{eq}), we linearize the operator ${\mathcal A}_{\Phi, \mathcal S}$. One can assume that $f\ge 0$. For any $Q\in {\mathcal S}$ there exists $g_{(Q)}$ supported in $\bar Q$ such that $\|g_{(Q)}\|_{\bar \Phi,\bar Q}\le 1$ and
$$
\|f\|_{\Phi,\bar Q}\le (fg_{(Q)})_{\bar Q}.
$$
Define now a linear operator
$$L(h)(x)=\sum_{Q\in {\mathcal S}}(hg_{(Q)})_{\bar Q}\chi_Q(x).$$
Then in order to prove (\ref{a1norm}), it suffices to show that
\begin{equation}\label{suff}
\|L(h)\|_{L^p(w)}\le c(n)pC_{\Phi}\Big(\frac{p+1}{2}\Big)[w]_{A_1}\|h\|_{L^p(w)},
\end{equation}
uniformly in $g_{(Q)}$.

Exactly as it was done in \cite{LOP1}, we have that (\ref{suff}) will follow from
\begin{equation}\label{suff1}
\|L(h)\|_{L^p(w)}\le c(n)pC_{\Phi}\Big(\frac{p+1}{2}\Big)
\Big(\frac{1}{r-1}\Big)^{1-1/pr}\|h\|_{L^p(M_rw)},
\end{equation}
where $1<r<2$. Indeed, taking here $r=r_w=1+\frac{1}{2^{n+1}[w]_{A_1}}$, by (i) of Proposition \ref{sum},
$$
\Big(\frac{1}{r_w-1}\Big)^{1-1/pr_w}\|h\|_{L^p(M_{r_w}w)}\le c(n)[w]_{A_1}\|h\|_{L^p(w)},
$$
which yields (\ref{suff}).

Let $L^*$ denote the formal adjoint of $L$. By duality, (\ref{suff1}) is equivalent to
$$
\|L^*(h)\|_{L^{p'}((M_rw)^{-\frac{1}{p-1}})}\le
c(n)pC_{\Phi}\Big(\frac{p+1}{2}\Big)
\Big(\frac{1}{r-1}\Big)^{1-1/pr}\|h\|_{L^{p'}(w^{-\frac{1}{p-1}})},
$$
which, by (ii) of Proposition \ref{sum}, is an immediate corollary of
\begin{equation}\label{suff2}
\|L^*(h)\|_{L^{p'}((M_rw)^{-\frac{1}{p-1}})}\le c(n)C_{\Phi}\Big(\frac{p+1}{2}\Big)\|Mh\|_{L^{p'}((M_rw)^{-\frac{1}{p-1}})}.
\end{equation}

We now prove (\ref{suff2}).
By duality, pick $\eta\ge 0$ such that $\|\eta\|_{L^p(M_rw)}=1$ and
$$\|L^*(h)\|_{L^{p'}((M_rw)^{-\frac{1}{p-1}})}=\int_{{\mathbb R}^n}L^*(h)\eta dx=\int_{{\mathbb R}^n}hL(\eta) dx.$$
Applying (\ref{eq}) again, we get
\begin{eqnarray*}
\int_{{\mathbb R}^n}hL(\eta) dx&=&\sum_{Q\in {\mathcal S}}(\eta g_{(Q)})_{\bar Q}\int_Qh\le
2\sum_{Q\in {\mathcal S}}\|\eta\|_{\Phi,\bar Q}\int_Qh\\
&\le& 2(2\sqrt n)^n\sum_{Q\in {\mathcal S}}\|\eta\|_{\Phi,\bar Q}h_{\bar Q}|Q|\\
&\le& 4(2\sqrt n)^n \sum_{Q\in {\mathcal S}} \|(Mh)^{\frac{1}{p+1}}\eta\|_{\Phi,\bar Q}(h_{\bar Q})^{\frac{p}{p+1}}|E(Q)|\\
&\le& 4(2\sqrt n)^n\sum_{Q\in {\mathcal S}}\int_{E(Q)}M_{\Phi}((Mh)^{\frac{1}{p+1}}\eta)(Mh)^{\frac{p}{p+1}}dx\\
&\le& 4(2\sqrt n)^n \int_{{\mathbb R}^n}M_{\Phi}((Mh)^{\frac{1}{p+1}}\eta)(Mh)^{\frac{p}{p+1}}dx.
\end{eqnarray*}
Next, by H\"older's inequality with the exponents $s=\frac{p+1}{2}$ and $s'=\frac{p+1}{p-1}$,
\begin{eqnarray*}
&&\int_{{\mathbb R}^n}M_{\Phi}((Mh)^{\frac{1}{p+1}}\eta)(Mh)^{\frac{p}{p+1}}dx\\
&&=\int_{{\mathbb R}^n}M_{\Phi}((Mh)^{\frac{1}{p+1}}\eta)(M_rw)^{\frac{1}{p+1}}(Mh)^{\frac{p}{p+1}}(M_rw)^{-\frac{1}{p+1}}dx\\
&&\le \|M_{\Phi}((Mh)^{\frac{1}{p+1}}\eta)\|_{L^{\frac{p+1}{2}}((M_rw)^{1/2})}\|Mh\|_{L^{p'}((M_rw)^{-\frac{1}{p-1}})}^{\frac{p}{p+1}}.
\end{eqnarray*}
Further, we apply (\ref{per}) along with Coifman's inequality \cite{CR} saying that $M(M_rw)^{1/2}\le c(n)(M_rw)^{1/2}$. We obtain
\begin{eqnarray*}
&&\|M_{\Phi}((Mh)^{\frac{1}{p+1}}\eta)\|_{L^{\frac{p+1}{2}}((M_rw)^{1/2})}\\
&&\le c(n)C_{\Phi}\Big(\frac{p+1}{2}\Big)
\|(Mh)^{\frac{1}{p+1}}\eta\|_{L^{\frac{p+1}{2}}(M(M_rw)^{1/2})}\\
&&\le c(n)C_{\Phi}\Big(\frac{p+1}{2}\Big)\|(Mh)^{\frac{1}{p+1}}\eta\|_{L^{\frac{p+1}{2}}((M_rw)^{1/2})}.
\end{eqnarray*}
Using again H\"older's inequality with $s=2p'$ and $s'=\frac{2p}{p+1}$ gives
\begin{eqnarray*}
&&\|(Mh)^{\frac{1}{p+1}}\eta\|_{L^{\frac{p+1}{2}}((M_rw)^{1/2})}\\
&&=\left(\int_{{\mathbb R}^n}\Big((Mh)^{\frac{1}{2}}(M_rw)^{-\frac{1}{2p}}\Big)\Big(\eta^{\frac{p+1}{2}}(M_rw)^{\frac{p+1}{2p}}\Big)dx\right)^{\frac{2}{p+1}}\\
&&\le \|Mh\|_{L^{p'}((M_rw)^{-\frac{1}{p-1}})}^{\frac{1}{p+1}}\|\eta\|_{L^p(M_rw)}=\|Mh\|_{L^{p'}((M_rw)^{-\frac{1}{p-1}})}^{\frac{1}{p+1}}.
\end{eqnarray*}
Combining this estimate with the three previous ones yields (\ref{suff2}), and therefore the theorem is proved.
\end{proof}

\begin{remark}\label{aboutin}
Inequality (\ref{suff2}) looks exactly as a Coifman type estimate relating $L^*$ and $M$.
However, we do not know whether there is a good-$\lambda$ inequality related $L^*$ and $M$ by
the reasons described in Remark \ref{rem5}.
\end{remark}

\subsection{A Buckley type result for $M_{\Phi}$} In order to prove the second part of Theorem \ref{mainr},
we need an extension of Buckley's bound \cite{B}:
\begin{equation}\label{bu}
\|M\|_{L^p(w)}\le c(p,n)[w]_{A_p}^{\frac{1}{p-1}}\quad (1<p<\infty)
\end{equation}
to Orlicz maximal functions $M_{\Phi}$ with general $\Phi$. In the recent work~\cite{LPR},
the case $\Phi(t)=t\log^{\la}({\rm e}+t), \la\ge 0,$ was considered:
$$\|M_{L(\log L)^{\la}}\|_{L^p(w)}\le c(p,n)[w]_{A_p}^{\frac{1+\la}{p-1}}\quad (1<p<\infty).$$
Observe that the proof in \cite{LPR} essentially contains an estimate for general $\Phi$ as stated
below in Theorem \ref{buck}. For the sake of completeness we give a somewhat different proof avoiding certain
details in \cite{LPR} (such as extrapolation). As we will see below, our proof is a direct
generalization of Buckley's proof of (\ref{bu}).

\begin{theorem}\label{buck} For all $p>1$ and any $w\in A_p$,
$$\|M_{\Phi}\|_{L^p(w)}\le c(p,n)[w]_{A_p}^{\frac{1}{p}}C_{\Phi^{p-\e}}(p),$$
where $\e\simeq [w]_{A_p}^{1-p'}$.
\end{theorem}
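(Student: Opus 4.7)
The plan is to mirror Buckley's proof of (\ref{bu}), extracting the exponent $p-\e$ in $C_{\Phi^{p-\e}}(p)$ from the sharp reverse H\"older inequality. First, by the standard one-third trick it suffices to control the dyadic variant $M_\Phi^{\mathscr{D}}$ for a fixed grid $\mathscr{D}$. A Calder\'on--Zygmund decomposition at each level set $\{M_\Phi^{\mathscr{D}}f>2^k\}$ produces a sparse family $\mathcal{S}\subset \mathscr{D}$ for which
$$\|M_\Phi^{\mathscr{D}}f\|_{L^p(w)}^{p}\lesssim \sum_{Q\in\mathcal{S}}\|f\|_{\Phi,Q}^{p}\,w(Q).$$

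Next, with $\sigma=w^{1-p'}$ and $\sigma_Q=\sigma(Q)/|Q|$, the $A_p$ condition rewritten as $w(Q)\sigma_Q^{p}\le [w]_{A_p}\sigma(Q)$ converts the last sum into
$$[w]_{A_p}\sum_{Q\in\mathcal{S}}\bigl(\|f\|_{\Phi,Q}/\sigma_Q\bigr)^{p}\sigma(Q).$$
After extracting the $p$-th root this already accounts for the factor $[w]_{A_p}^{1/p}$ in the statement; the remaining task is to bound the inner sum by $C_{\Phi^{p-\e}}(p)^{p}\|f\|_{L^p(w)}^{p}$. When $\Phi(t)=t$ one recognises $\|f\|_{\Phi,Q}/\sigma_Q=(f/\sigma)_Q^{\sigma}$, and the desired bound reduces to the $L^p(\sigma)$ boundedness of the dyadic $\sigma$-maximal operator with constant $[\sigma]_{A_\infty}\le [w]_{A_p}^{p'-1}\simeq 1/\e\simeq C_{\Phi^{p-\e}}(p)^{p}$; this recovers (\ref{bu}) and explains the arithmetic $\tfrac{1}{p}+\tfrac{1}{p(p-1)}=\tfrac{1}{p-1}$.

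For a general Young function $\Phi$, I would invoke the Hyt\"onen--P\'erez sharp reverse H\"older inequality, which yields $\sigma\in\mathrm{RH}_{1+\e}$ with $\e\simeq [w]_{A_p}^{1-p'}$, and apply it inside the Luxemburg definition of $\|f\|_{\Phi,Q}$ to pass from the Lebesgue Orlicz norm to a $\sigma$-weighted one. This Jensen/H\"older manipulation raises the Young function to the power $p-\e$, so that the inner sum becomes a Carleson embedding against $\sigma\,dx$ controlled by the Fefferman--Stein--P\'erez inequality (\ref{per}) applied on $L^p(\sigma)$, which furnishes the constant $C_{\Phi^{p-\e}}(p)$. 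The main obstacle is precisely this passage, namely producing the Orlicz analogue of Buckley's identity $f_Q/\sigma_Q=(f/\sigma)_Q^\sigma$; it is at this step that the sharp reverse H\"older exponent $\e\simeq [w]_{A_p}^{1-p'}$ enters and forces $\Phi^{p-\e}$ (rather than $\Phi$) to appear in the final constant. Once that comparison is in hand, the rest of the argument is a direct transcription of Buckley's proof.
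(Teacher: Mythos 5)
Your discretization and the extraction of $[w]_{A_p}^{1/p}$ from $w(Q)\si_Q^p\le [w]_{A_p}\si(Q)$ are fine, and your treatment of the model case $\Phi(t)=t$ is essentially correct (one quibble: the dyadic $\si$-maximal operator is bounded on $L^p(\si)$ with the universal constant $p'$; the factor $[\si]_{A_\infty}\le[w]_{A_p}^{p'-1}$ enters because your family is sparse with respect to Lebesgue measure, so the Carleson packing condition with respect to $\si\,dx$ costs $[\si]_{A_\infty}$ — but your arithmetic $\tfrac1p+\tfrac1{p(p-1)}=\tfrac1{p-1}$ is right). The genuine gap is that for general $\Phi$ the argument stops exactly where the theorem begins: the passage from $\|f\|_{\Phi,Q}/\si_Q$ to a weighted Orlicz average, which you yourself label ``the main obstacle,'' is the entire content of the result, and the fix you gesture at does not obviously work. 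Applying $\si\in \mathrm{RH}_{1+\e}$ ``inside the Luxemburg definition'' does not produce $\Phi^{p-\e}$; and even granting some such comparison, your route pays \emph{twice} — once through the Carleson embedding constant $[\si]_{A_\infty}\simeq 1/\e$ and once through whatever weighted Orlicz maximal bound replaces the universal maximal theorem — and you have not checked that the product of these losses is $\simeq C_{\Phi^{p-\e}}(p)^p$. The identification $1/\e\simeq C_{\Phi^{p-\e}}(p)^p$ that closes your linear case holds \emph{only} for $\Phi(t)=t$, so that bookkeeping does not transfer.

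For comparison, the paper's proof dualizes to $w$ rather than to $\si$ and needs no sparse family or Carleson embedding. By H\"older, $\frac1{|Q|}\int_Q\Phi(|f|/\a)\,dx\le[w]_{A_p}^{1/p}\bigl(\frac1{w(Q)}\int_Q\Phi(|f|/\a)^p w\,dx\bigr)^{1/p}$, i.e. $\|f\|_{\Phi,Q}\le\|f\|_{[w]_{A_p}\Phi^p,Q}^{w}$, where the right-hand side is the $w$-weighted Luxemburg norm. The exponent $p$ is then lowered to $p-\e$ by the openness property $w\in A_{p-\e}$ with $[w]_{A_{p-\e}}\lesssim[w]_{A_p}$ for $\e\simeq[w]_{A_p}^{1-p'}$ — this is where the sharp reverse H\"older inequality actually enters, applied to $\si$ but used through the $A_{p-\e}$ constant of $w$, not inside the Luxemburg norm. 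This gives the pointwise bound $M_{\Phi}f\le M^c_{c(n,p)[w]_{A_p}\Phi^{p-\e},w}f$, and a $w$-weighted version of (\ref{per}) for the centered weighted Orlicz maximal operator (proved via Besicovitch) finishes, since $C_{\la\Psi}(p)=\la^{1/p}C_{\Psi}(p)$ delivers the factor $[w]_{A_p}^{1/p}C_{\Phi^{p-\e}}(p)$ in one stroke. If you want to complete your outline, the missing lemma is precisely this pointwise domination.
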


\begin{proof} Given a cube $Q$, define the weighted mean Luxemburg norm
$$\|f\|_{\Phi,Q}^{w}=\inf\left\{\a>0:\frac{1}{w(Q)}\int_Q\Phi\left(\frac{|f(x)|}{\a}\right)wdx\le 1\right\},$$
and consider the weighted centered Orlicz maximal function $M_{\Phi,w}^c$ defined by
$$
M_{\Phi,w}^cf(x)=\sup_{Q\ni x}\|f\|_{\Phi,Q}^{w},
$$
where the supremum is taken over all cubes $Q$ centered at $x$ (similarly we denote by $M_{\Phi}^cf$ the unweighted centered maximal function).
Then we have the following version of (\ref{per}): for any weight $w$ and all $p>1$,
\begin{equation}\label{pw}
\|M_{\Phi,w}^cf\|_{L^p(w)}\le c(n)C_{\Phi}(p)\|f\|_{L^p(w)}.
\end{equation}
The proof follows exactly the same lines as the proof of the unweighted version in
\cite{Pe} (only one should apply the Besicovitch covering theorem to get a weak type bound)
and hence we omit details.

For any $\a>0$, by H\"older's inequality,
$$\frac{1}{|Q|}\int_Q\Phi(|f|/\a)dx\le [w]_{A_p}^{\frac{1}{p}}\Big(\frac{1}{w(Q)}\int_Q\Phi(|f|/\a)^pwdx\Big)^{1/p},$$
which implies
$$\|f\|_{\Phi,Q}\le \|f\|_{[w]_{A_p}\Phi^p,Q}^{w}.$$
From this and from the standard estimate $M_{\Phi}f(x)\le M_{c(n)\Phi}^cf(x)$ we obtain
$$M_{\Phi}f(x)\le M_{[w]_{A_p}(c(n)\Phi)^p,w}^cf(x).$$
Now we use the fact that if $\e\simeq [w]_{A_p}^{1-p'}$, then $w\in A_{p-\e}$ and $[w]_{A_{p-\e}}\lesssim [w]_{A_p}$ (see \cite{B}).
Combining this with the previous estimate yields
$$M_{\Phi}f(x)\le M_{c(n,p)[w]_{A_p}\Phi^{p-\e},w}^cf(x).$$
Therefore, by (\ref{pw}),
\begin{eqnarray*}
\|M_{\Phi}f\|_{L^p(w)}&\le& \|M_{c(n,p)[w]_{A_p}\Phi^{p-\e},w}^cf\|_{L^p(w)}\\
&\le& c(n)C_{c(n,p)[w]_{A_p}\Phi^{p-\e}}(p)\|f\|_{L^p(w)}\\
&=&c(n)c(n,p)^{\frac{1}{p}}[w]_{A_p}^{\frac{1}{p}}C_{\Phi^{p-\e}}(p)\|f\|_{L^p(w)},
\end{eqnarray*}
which completes the proof.
\end{proof}

\subsection{Proof of Theorem \ref{mainr}, part (ii)}
We will need a generalization of the classical equivalence \cite{St}
$$\frac{1}{|Q|}\int_QM(f\chi_Q)dx\simeq \|f\|_{L\log L,Q}$$
to general Young functions. This can be stated as follows.

Given a Young function $\Psi$, define
\begin{equation}\label{psistar}
\Psi^{\star}(t)=\begin{cases} t, & 0\le t\le 1\\
t+t\int_1^{t}\frac{\Psi(u)}{u^2}du, & t>1.\end{cases}
\end{equation}
Then (see \cite[Theorems 10.5,10.6]{W})
\begin{equation}\label{equiv}
\frac{1}{|Q|}\int_QM_{\Psi}(f\chi_Q)dx\simeq \|f\|_{\Psi^{\star},Q}.
\end{equation}

\begin{proof}[Proof of Theorem \ref{mainr}, part (ii)] This is just a combination of several previously established
bounds. As in the proof of the first part of Theorem \ref{mainr}, by Lemma \ref{kest}, it is enough to get a uniform estimate of
$\|{\mathcal A}_{\Phi, \mathcal S}f\|_{L^p(w)}$.

By the assumption $\Phi(t)\simeq t\int_1^t\frac{\Psi(u)}{u^2}du$ for $t\ge c_0$ we have that $\Phi\simeq\Psi^{\star}$,
where $\Psi^{\star}$ is defined by (\ref{psistar}). Hence, using (\ref{equiv}), we obtain
\begin{eqnarray*}
{\mathcal A}_{\Phi, \mathcal S}f(x)=\sum_{Q\in {\mathcal S}}\|f\|_{\Phi, \bar Q}\chi_Q(x)&\simeq& \sum_{Q\in {\mathcal S}}
\left(\frac{1}{|\bar Q|}\int_{\bar Q}M_{\Psi}(f\chi_{\bar Q})dx\right)\chi_Q(x)\\
&\le& {\mathcal T}(M_{\Psi}f)(x),
\end{eqnarray*}
where the operator ${\mathcal T}$ is defined by
$${\mathcal T}f(x)=\sum_{Q\in {\mathcal S}}\left(\frac{1}{|\bar Q|}\int_{\bar Q}fdx\right)\chi_Q(x).$$

Therefore, using that $\|{\mathcal T}\|_{L^p(w)}\lesssim [w]_{A_p}^{\max(1,1/(p-1))}$ (see \cite{CMP}) and applying Theorem \ref{buck},
we obtain
$$
\|{\mathcal A}_{\Phi, \mathcal S}\|_{L^p(w)}\lesssim \|{\mathcal T}\|_{L^p(w)}\|M_{\Psi}\|_{L^p(w)}
\lesssim [w]_{A_p}^{\max\big(1,\frac{1}{p-1}\big)}[w]_{A_p}^{\frac{1}{p}}C_{\Psi^{p-\e}}(p),
$$
where $\e\simeq [w]_{A_p}^{1-p'}$, and this completes the proof.
\end{proof}

\section{Remarks and complements}
\subsection{More about $A_p$ bounds for $\|{\mathcal C}\|_{L^p(w)}$}
Let $\a_p$ be the best possible exponent in
\begin{equation}\label{bestp}
\|{\mathcal C}\|_{L^p(w)}\lesssim [w]_{A_p}^{\a_p}.
\end{equation}

As we have seen, our proof of Corollary~\ref{carl}, part (ii), is
based essentially on (\ref{cond}) with
$$\Phi(t)=t\log({\rm e}+t)\log\log\log({\rm e}^{{\rm e}^{\rm e}}+t),$$
which is intimately related to Antonov's theorem \cite{A} on a.e. convergence of Fourier series for functions in $L\log L\log\log\log L$.
A question whether the class $L\log L\log\log\log L$ can be improved is still open. The main conjecture about this says that Fourier series
converge a.e. for functions in $L\log L$. A natural reformulation of this conjecture is that $(\ref{cond})$ for ${\mathcal C}$ holds with
$\Phi(t)=t\log ({\rm e}+t)$. Let us check what can be done assuming that this result is true.

First, it is easy to see that following our approach we would obtain that for all $p>1$,
\begin{equation}\label{pos1}
\|{\mathcal C}\|_{L^p(w)}\le c\frac{p^3}{(p-1)^2}[w]_{A_1}
\end{equation}
and
\begin{equation}\label{posb}
\|{\mathcal C}\|_{L^p(w)}\le c(p)[w]_{A_p}^{\max\big(p',\frac{2}{p-1}\big)}.
\end{equation}

In particular, the ``$L\log L$ conjecture" implies $\|{\mathcal C}\|_{L^p}\lesssim \frac{p^3}{(p-1)^2}$ and
$\a_p\le \max\big(p',\frac{2}{p-1}\big)$. It is natural to conjecture further that the unweighted bound for $\|{\mathcal C}\|_{L^p}$
is best possible, that is, $\|{\mathcal C}\|_{L^p}\simeq \frac{p^3}{(p-1)^2}$.
Then one can easily get a lower bound for $\a_p$ that coincides with the upper bound for $1<p\le 2$.

Indeed, a well known argument given by Fefferman-Pipher \cite{FP} (see also \cite{LPR} for an extension of this argument) says that if $T$ satisfies $\|T\|_{L^{p_0}(w)}\lesssim N([w]_{A_1})$ for some $p_0$, then $\|T\|_{L^r}\lesssim N(cr)$ as $r\to \infty$. Hence, on one hand, since $\|{\mathcal C}\|_{L^r}\simeq r$ as $r\to \infty$,
we obtain that $\a_p\ge 1$ for all $p>1$. On the other hand, let
${\mathcal C}_{\xi(\cdot)}$ be a linearization of ${\mathcal C}$ as in Remark~\ref{rem5}.
Then, by duality and by~(\ref{bestp}),
$$\|{\mathcal C}_{\xi(\cdot)}^*\|_{L^{p'}(w)}=\|{\mathcal C}_{\xi(\cdot)}\|_{L^{p}(w^{-(p-1)})}\lesssim [w^{-(p-1)}]_{A_{p}}^{\a_p}=[w]_{A_{p'}}^{\a_p(p-1)},$$
and hence $\|{\mathcal C}_{\xi(\cdot)}^*\|_{L^r}\lesssim r^{\a_p(p-1)}$ as $r\to \infty$, which implies
$$\|{\mathcal C}\|_{L^r}\lesssim \frac{1}{(r-1)^{\a_p(p-1)}}$$
as $r\to 1$. Conjecturing that $\|{\mathcal C}\|_{L^r}\simeq \frac{1}{(r-1)^2}$ as $r\to 1$, we obtain $\a_p\ge \frac{2}{p-1}$.
Therefore,  $\a_p\ge \max\big(1,\frac{2}{p-1}\big)$.

Concluding, we see that if the ``$L\log L$ conjecture" holds and if the best possible behavior of $\|{\mathcal C}\|_{L^p}$ is $\frac{1}{(p-1)^2}$ when $p$ is close to 1,
then for all $p>1$,
$$\max\Big(1,\frac{2}{p-1}\Big)\le \a_p\le \max\Big(p',\frac{2}{p-1}\Big).$$
In particular, $\a_p=\frac{2}{p-1}$ for $1<p\le 2$.

It seems that a natural obstacle in our approach is that the ``local mean oscillation estimate" essentially relies on the end-point information
of a given operator, while a sharp end-point information of the Carleson operator is currently unknown. It is natural to ask whether there is an
approach to sharp $L^p(w)$ estimates avoiding the information about end-point bounds. Observe that this is unknown even for Calder\'on-Zygmund operators.

\subsection{On mixed $A_p$-$A_{\infty}$ bounds}
Following recent works, where the $A_p$ bounds were improved by mixed $A_p$-$A_{\infty}$ bounds (see, e.g., \cite{HL,HP,HPR}),
we can give similar results for $T^{\mathcal F}$.

Given a weight $w$, define its $A_{\infty}$ constant by
$$[w]_{A_{\infty}}=\sup_{Q}\frac{1}{w(Q)}\int_{Q}M(w\chi_Q)dx.$$
It was shown in \cite{HP} that part (i) of Proposition \ref{sum} holds with the $[w]_{A_1}$ constant replaced by
$[w]_{A_{\infty}}$. Changing only this point in the proof of Theorem \ref{mainr}, part (i), we get
that for any $w\in A_1$ and for all $p>1$,
$$\|T^{\mathcal F}f\|_{L^p(w)}\le c(n,T)pC_{\Phi}\Big(\frac{p+1}{2}\Big)[w]_{A_1}^{\frac{1}{p}}[w]_{A_{\infty}}^{\frac{1}{p'}}\|f\|_{L^p(w)}.$$
For Calder\'on-Zygmund operators this inequality was obtained in \cite{HP}.

Further, it was shown in \cite{HPR} that if $w\in A_p$ and $\e\simeq [\si]_{A_{\infty}}$, where, as usual, $\si=w^{-\frac{1}{p-1}}$, then $w\in A_{p-\e}$ and $[w]_{A_{p-\e}}\lesssim [w]_{A_p}$.
It is easy to see from this result that the condition $\e\simeq [w]_{A_p}^{1-p'}$ in Theorem \ref{mainr} can be replaced by $\e\simeq [\si]_{A_{\infty}}$.

Then, in the case of the Carleson operator, by Remark \ref{rem3},
$$C_{\Psi^{p-\e}}(p)\simeq \frac{1}{\e^{1/p}}\log\log({\rm e}^{\rm e}+1/\e)\simeq [\si]_{A_{\infty}}^{\frac{1}{p}}\log\log({\rm e}^{\rm e}+[\si]_{A_{\infty}}),$$
and hence
$$\|M_{\Psi}\|_{L^p(w)}\lesssim\big([w]_{A_p}[\si]_{A_{\infty}}\big)^{\frac{1}{p}}\log\log({\rm e}^{\rm e}+[\si]_{A_{\infty}}).$$
Also, observe that the operator ${\mathcal T}$ defined in the proof of Theorem \ref{mainr} satisfies (see \cite{HL})
$$\|{\mathcal T}\|_{L^p(w)}\lesssim [w]_{A_p}^{\frac{1}{p}}\big([w]_{A_{\infty}}^{\frac{1}{p'}}+[\si]_{A_{\infty}}^{\frac{1}{p}}\big).$$
Therefore, combining this with the bound for $M_{\Psi}$, we obtain
$$\|{\mathcal C}\|_{L^p(w)}\lesssim [w]_{A_p}^{\frac{2}{p}}
\big([w]_{A_{\infty}}^{\frac{1}{p'}}+[\si]_{A_{\infty}}^{\frac{1}{p}}\big)
[\si]_{A_{\infty}}^{\frac{1}{p}}\log\log({\rm e}^{\rm e}+[\si]_{A_{\infty}}).
$$

\end{document}